\documentclass[reqno]{amsart}
\usepackage{latexsym,fancyhdr}
\usepackage{amsmath,amssymb,xcolor}
\usepackage{booktabs}
\usepackage{multicol}
\usepackage{mathtools}
\usepackage{wrapfig}
\usepackage{times}
\usepackage{pb-diagram}
\usepackage[all,cmtip]{xy} 
\usepackage{lineno}
\usepackage{caption}
\usepackage{mathabx}%for \notequiv
\usepackage{tikz,url}
\usepackage{etoolbox} 
\usepackage{cleveref}
\usepackage{graphicx} % Required for inserting images

\usepackage{algorithm}
\usepackage[noend]{algpseudocode}

\newtheorem{Tma}{Theorem}[section]

\newtheorem{lemma}[Tma]{Lemma}
\newtheorem{conjecture}[Tma]{Conjecture}
\newtheorem{proposition}[Tma]{Proposition}
\newtheorem{corollary}[Tma]{Corollary}
\newtheorem{theorem}[Tma]{Theorem}

\theoremstyle{definition}

\newtheorem{remark}[Tma]{Remark}
\newtheorem{problem}[Tma]{Problem}
\newtheorem{compprop}[Tma]{Computational Proposition}

\definecolor{lblue}{RGB}{50,90,250}

\newcommand{\Aut}{\mathrm{Aut}}

\newcommand{\nonsplit}[2]{#1\raisebox{0.6ex}{$\cdot$} #2}

\title{An infinite family of counterexamples to the Polycirculant Conjecture}
\author{Saul D. Freedman and Melissa Lee}
\date{\today}

\address[Freedman]{Department of Mathematics, Colorado State University, Fort Collins, CO 80523, USA}
\address[Lee]{School of Mathematics, Monash University, Clayton, Australia}

\email{saul.freedman@colostate.edu, melissa.lee@monash.edu}

\subjclass{}
\keywords{polycirculant conjecture, elusive group, derangement, vertex-transitive graph, semiregular automorphism}
\thanks{The authors would like to thank the Isaac Newton Institute for Mathematical Sciences, Cambridge, for support and hospitality during the programme \emph{Algebraic groups, geometry, invariants and related topics}, where work on this paper was undertaken. This work was supported by EPSRC grant EP/Z000580/1.}

\begin{document}

\begin{abstract}
We disprove the Polycirculant Conjecture, which states that every transitive $2$-closed permutation group is non-elusive, i.e.~contains a derangement of prime order. In fact, we prove a stronger result, answering a long-standing question of Maru{\v{s}}i{\v{c}} and Jordan: there exists a vertex-transitive graph admitting no semiregular automorphism. To do so, we employ recently developed methods of Chen et al.~for constructing elusive groups via non-split extensions, allowing us to construct an elusive group $\nonsplit{7^6}{\mathrm{PSU}_3(3)}$ of degree $16{,}464$. We show that this group is the full automorphism group of seven of its orbital graphs and hence is $2$-closed. Our example extends to infinitely many counterexamples of the Polycirculant Conjecture, and infinitely many vertex-transitive graphs admitting no semiregular automorphism.
\end{abstract}

\maketitle

\section{Introduction}
\label{sec:intro}

A \textit{derangement} is a finite permutation that has no fixed points. A classical theorem of Jordan asserts that every finite transitive group contains a derangement.  Using the classification of finite simple groups, Fein, Kantor and Schacher \cite{fein1981relative} showed that Jordan's theorem may be strengthened to require the existence of a derangement of prime power order. However, perhaps surprisingly, it is false that every finite transitive permutation group must contain a derangement of prime order. For instance, the Mathieu group $M_{11}$ acting on 12 points has derangements only of orders 4 and 8. Due to the seeming rarity of finite transitive permutation groups containing no derangements of prime order, such a group is termed \textit{elusive}.

Elusive groups have attracted significant interest (see for example \cite{burness2018locally,cameron2002transitive,giudici2009characterizing}), in part due to their connection to the well-known Polycirculant Conjecture, posed by Klin \cite[Problem BCC15.12]{cameron1997problems} in 1997. Recall that the \emph{$2$-closure} of a permutation group $G\leq \mathrm{Sym}(\Omega)$ is the largest subgroup of $\mathrm{Sym}(\Omega)$ with the same orbits on $\Omega\times \Omega$ as $G$, and that $G$ is \emph{$2$-closed} if it is its own $2$-closure.

\begin{conjecture}[The Polycirculant Conjecture]
No elusive group is $2$-closed.
\end{conjecture}

This conjecture extends the earlier Semiregularity Problem, formulated by Maru{\v{s}}i{\v{c}} \cite{maruvsivc1981vertex} and Jordan \cite{jordan1988symmetrieeigenschaft} in the 1980s. A non-trivial permutation is called \emph{semiregular} if all of its cycles are the same length (so in particular, it has no trivial cycles).

\begin{problem}[The Semiregularity Problem]
Does there exist a vertex-transitive directed graph admitting no semiregular automorphism?
\end{problem}

This problem includes the case where the arc set of the directed graph is symmetric, so that the graph can be considered undirected. Observe that each derangement of prime order is semiregular, and each semiregular permutation has a power equal to a derangement of prime order. Hence the Semiregularity Problem asks if there is a vertex-transitive directed graph whose full automorphism group is elusive. Note that, by definition, the automorphism group of any finite directed graph is $2$-closed. Hence a positive answer to the Semiregularity Problem would imply that the Polycirculant Conjecture is false. On the other hand, there exist $2$-closed groups that are not the full automorphism groups of any directed graph, such as the subgroup $C_2 \times C_2$ of $S_4$. Therefore, a counterexample to the Polycirculant Conjecture might not provide an answer to the Semiregularity Problem.

The Polycirculant Conjecture and Semiregularity Problem have received considerable attention; see \cite{arezoomand2019problems} for a survey of results. Perhaps most notably, Giudici \cite{giudici2003quasiprimitive} determined all elusive permutation groups with at least one transitive minimal normal subgroup (hence all elusive quasiprimitive groups) and showed that each satisfies the Polycirculant Conjecture. A few years later, Giudici and Xu \cite{giudicixu} proved that the conjecture is satisfied for every group that is \emph{biquasiprimitive}, i.e.~transitive with each minimal normal subgroup having at most two orbits. Hence in any counterexample to the conjecture, each minimal normal subgroup must be intransitive, and at least one must have at least three orbits. %There are a number of known examples of elusive groups with such minimal normal subgroups, each constructed as a split extension of an elusive group. However, all of these examples satisfy the Polycirculant Conjecture.

Recently, Chen et al.~\cite{chen2026elusive} described a novel method of constructing elusive groups via non-split extensions, and used the method to construct the first known examples of elusive groups with odd and twice-odd degree. Their examples again satisfy the Polycirculant Conjecture.

In this paper, we further the methods of Chen et al.~to construct the first known groups that are counterexamples to the Polycirculant Conjecture, and the first known graphs that are positive answers to the Semiregularity Problem. In fact, we construct infinitely many such groups and graphs.

To state our main results, we require the following notation. We will write $\nonsplit{N}{G}$ to denote a non-split extension of a group $N$ by a group $G$, with $N \mathbin{:} G$ denoting a split extension of $N$ by $G$ (not to be confused with the index $|G:H|$ in $G$ of a subgroup $H$, nor the set $[G:H]$ of right cosets of $H$ in $G$). Additionally, for a prime $p$ and a positive integer $a$, we write $p^a$ to denote the elementary abelian group of order $p^a$. Recall also that an \emph{orbital} $O$ of a permutation group $G$ on a set $\Omega$ is an orbit of $G$ on $\Omega \times \Omega$, and that the corresponding \emph{orbital graph} $\Gamma_O$ is the directed graph with vertex set $\Omega$ and arc set $O$. Clearly, each element $g \in G$ is an automorphism of $\Gamma_O$, with $(\alpha,\beta)^g = (\alpha^g,\beta^g)$ for each $(\alpha,\beta) \in O$, and if $G$ is transitive, then $\Gamma_O$ is vertex-transitive. If $O$ is \emph{self-paired}, i.e.~if $(\beta,\alpha) \in O$ for all $\alpha, \beta \in \Omega$ with $(\alpha,\beta) \in O$, then we can consider the self-paired orbital graph $\Gamma_O$ as an undirected graph.

\begin{theorem}
\label{thm:main}
There is an elusive permutation group $X$ of degree $16{,}464 = 2^4.3.7^3$ isomorphic to $\nonsplit{7^6}{\mathrm{PSU}_3(3)}$, with point stabiliser of shape $7^4\mathbin{:}(C_3\times S_3)$, such that $X$ has (at least) seven pairwise non-isomorphic, connected, self-paired orbital graphs $\Gamma_O$ of valency $63$, with $\Aut(\Gamma_O) = X$ for each $\Gamma_O$. Therefore, each of these vertex-transitive graphs of order $16{,}464$ admits no semiregular automorphism.
\end{theorem}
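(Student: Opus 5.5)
The plan is to build $X$ explicitly, check elusiveness by a fusion argument on prime-order elements, and then compute automorphism groups of selected orbital graphs.

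\textbf{Construction.} I would start from $G=\mathrm{PSU}_3(3)$, of order $6048=2^5.3^3.7$, and an irreducible $\mathbb{F}_7G$-module $V$ of dimension $6$. A natural candidate is the $6$-dimensional representation, which over $\mathbb{F}_7$ arises from $\mathrm{PSU}_3(3)\cong G_2(2)'\le G_2(7)$ on the $7$-dimensional module, or from a $6$-dimensional constituent. Following Chen et al., I would show $H^2(G,V)\neq 0$ and take a non-split extension $X=\nonsplit{V}{G}$ given by a nonzero class. In practice this is done in \textsc{Magma}, via a cohomology module and an explicit finitely presented or permutation model of $X$.

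\textbf{Choosing the stabiliser.} I would take $H\le X$ with $H\cap V$ a $4$-dimensional subspace $W$ and $HV/V\cong C_3\times S_3$, a subgroup of $G$ of order $18$ stabilising $W$. Then $|X:H|=7^6.6048/(7^4.18)=16{,}464$. The action on $[X:H]$ is faithful because $\mathrm{core}_X(H)\cap V$ is a $G$-submodule of $V$ properly contained in $V$, hence trivial by irreducibility.

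\textbf{Elusiveness.} A prime-order element is a derangement iff it meets no conjugate of $H$. I would treat the primes $2,3,7$ in turn, since $|X|=2^5.3^3.7^7$.

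For $p=2$ and $p=3$: since $|H|_2=2$ and $|H|_3=27=|X|_3$, every element of order $3$ is conjugate into $H$ by Sylow's theorem. For involutions, $G$ has one class, and a Sylow argument on $VC_2$ shows every involution of $X$ is conjugate to one in $H$.

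For $p=7$, this is the delicate case and where non-splitness is used. Every element of $V$ must lie in some $G$-translate of $W$, which I would verify by orbit counting: the union of the $G$-images of $W$ covers $V$. Elements of order $7$ outside $V$ map to $7$-elements $g\in G$. Non-splitness should force every preimage of $g$ to have order $49$, because the restriction of the cocycle to $\langle g\rangle$ is nonzero, i.e.\ $H^2(C_7,V)$ detects the class. So $X\setminus V$ has no elements of order $7$, which also explains why $7\nmid |HV/V|$ is harmless. I expect this step to be the crux, to be checked both theoretically and computationally.

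\textbf{Orbital graphs.} I would compute the suborbits of $H$ and pick those of length $63$ that are self-paired, giving seven orbital graphs. Connectivity follows once $H$ together with a representative $x$ with $(H,Hx)$ in the orbital generates $X$. Pairwise non-isomorphism I would show through invariants such as the number of triangles or other local counts.

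\textbf{Main obstacle: $\Aut(\Gamma_O)=X$.} The hardest step is proving $\Aut(\Gamma_O)=X$, since $16{,}464$ vertices is large. My first attempt would be a direct \textsc{Magma} or nauty computation of $|\Aut(\Gamma_O)|=|X|$. Failing that, I would argue structurally: let $A=\Aut(\Gamma_O)$. Use the block system given by the $V$-orbits (of size $7^6/7^4=49$) and the quotient graph on $336$ vertices to show that $A$ preserves these blocks. Then bound the kernel $K$ of $A$ on blocks via the local structure, and show $A_v$ has order $|H|$ by checking that the vertex stabiliser acts faithfully on the neighbourhood with $A_v=H$.

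Finally, since $A=X$ is elusive, no automorphism of $\Gamma_O$ is semiregular: any semiregular permutation has a power that is a derangement of prime order.
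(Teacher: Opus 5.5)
Your overall route is the paper's. You build the unique non-split $\nonsplit{7^6}{\mathrm{PSU}_3(3)}$, take a stabiliser $W\mathbin{:}\bar H$ with $\bar H\cong C_3\times S_3$, check elusiveness prime by prime, and compute $\Aut(\Gamma_O)$ directly. Your ``first attempt'' is what the paper does in Computational Propositions~\ref{compprop1} and~\ref{compprop2}, and it is feasible. Your $p=2$ and $p=7$ arguments are sound. The latter is the Gasch\"utz step of Proposition~\ref{prop:const}, and ``$\mathrm{H}^2(C_7,V)$ detects the class'' holds because restriction to the cyclic Sylow $7$-subgroup is injective (corestriction after restriction is multiplication by $|G|/7$).

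The $p=3$ step, however, is wrong. Since $|H|=7^4\cdot 18$, you have $|H|_3=9$, not $27=|X|_3$, so Sylow's theorem gives nothing. What you need is the lifting argument you used for involutions: Schur--Zassenhaus in $V\langle x\rangle$ shows that $X$-classes of elements of order $3$ correspond bijectively to $G$-classes. You also need the fusion fact that $\bar H$ meets both classes $3A$ and $3B$ of $\mathrm{PSU}_3(3)$. It does: the $C_3$ factor is the centre of a Sylow $3$-subgroup $3^{1+2}$ (class $3A$), and the remaining $3$-elements of $\bar H$ are non-central in it (class $3B$). This is part of the fact, checked computationally in the paper, that $G$ is $\{7\}'$-elusive on $[G:\bar H]$.

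The more serious gap is that you never say which $W$ to take, and the theorem depends on it. Up to conjugacy, $X$ has three subgroups of order $43{,}218$ projecting onto $\bar H$, and all give elusive actions of degree $16{,}464$. For the one whose stabiliser has nine normal subgroups, the socle satisfies the hypotheses of Theorem~5.3 of Cameron et al. Hence the $2$-closure of $X$, and so $\Aut(\Gamma_O)$ for \emph{every} orbital graph, contains a derangement of prime order (Remark~\ref{rem:otheractions}). You must take one of the two stabilisers with $16$ normal subgroups. Your step ``pick the self-paired suborbits of length $63$, giving seven orbital graphs'' presupposes this outcome rather than establishing it.

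Even in the correct action, the paper does not take all self-paired suborbits of length $63$. It restricts to the $14$ suborbits (all of length $63$) that are not suborbits of the normaliser $D$ of the socle in $X_\Delta^\Delta\wr\kappa(X^\Sigma)$; an $X$-orbital that is also a $D$-orbital has a graph admitting $D>X$. It keeps seven of the eight self-paired ones among these, and only then verifies $\Aut(\Gamma_O)=X$, connectivity and pairwise non-isomorphism by computation.

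Two minor points. First, faithfulness also needs simplicity of $G$: $\mathrm{core}_X(H)\cap V=1$ only shows that the core maps injectively onto a normal subgroup of $G$ inside $\bar H$. Second, the $7$-dimensional octonion module for $G_2(2)'\le G_2(7)$ has $7$-defect zero for $\mathrm{PSU}_3(3)$. So it stays irreducible mod $7$ and has no $6$-dimensional constituent.
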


This theorem, together with the elementary result Proposition~\ref{prop:dir_prod} on direct products and wreath products of $2$-closed groups and elusive groups, immediately yields the following corollary.

\begin{corollary}
\label{cor:2closed}
The elusive permutation group $X$ is $2$-closed. Moreover, for each integer $k \ge 1$, any group constructed from $k$ copies of $X$ via direct products and/or wreath products is isomorphic to a $2$-closed elusive group of degree $16{,}464^k$. These groups are therefore all counterexamples to the Polycirculant Conjecture.
%There is a $2$-closed elusive group $X$ of the form $\nonsplit{7^6}{\mathrm{PSU}_3(3)}$ of degree $16{,}464 = 2^4.3.7^3$ with stabiliser $Y$ of the form $7^4.(C_3\times S_3)$. Moreover, the direct product $X^k$ is $2$-closed and elusive of degree $16{,}464^k$ for each integer $k\geq 1$.
\end{corollary}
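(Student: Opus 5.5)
The corollary follows formally from Theorem~\ref{thm:main} together with Proposition~\ref{prop:dir_prod}. The only work is to check that the hypotheses of that proposition are met at each stage of an iterated construction.

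\textbf{$X$ is $2$-closed.} By Theorem~\ref{thm:main}, $X = \Aut(\Gamma_O)$ for some orbital graph $\Gamma_O$ of $X$. Let $\bar X$ be the $2$-closure of $X$. Since $\bar X$ has the same orbitals as $X$, it preserves $O$, and so $\bar X \le \Aut(\Gamma_O) = X$. Trivially $X \le \bar X$, hence $\bar X = X$. Alternatively, this is the general fact that the automorphism group of any directed graph is $2$-closed. Elusiveness of $X$ is part of the statement of Theorem~\ref{thm:main}.

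\textbf{The iterated construction.} We induct on the number $k$ of copies of $X$. Call a group \emph{admissible} if it is a $2$-closed elusive transitive group of degree $16{,}464^j$, where $j$ is the number of copies of $X$ used to build it. The case $k=1$ is the previous paragraph.

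Any group built from $k\ge 2$ copies via direct products and wreath products has a last operation, which combines two groups $A$ and $B$ built from $j$ and $k-j$ copies respectively, with $1\le j<k$. By induction both $A$ and $B$ are admissible. We take the direct product in its product action on $\Omega_A\times\Omega_B$, and the wreath product $A \wr B$ in its product action of degree $|\Omega_A|^{|\Omega_B|}$, or in its imprimitive action of degree $|\Omega_A||\Omega_B|$, whichever is the action that Proposition~\ref{prop:dir_prod} treats.

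Proposition~\ref{prop:dir_prod} then gives that the result is transitive, $2$-closed and elusive. Both halves of this need care. For $2$-closedness, the orbitals of a product are products of orbitals, and the wreath case additionally uses the block structure. For elusiveness, a prime-order element of $A\times B$ has prime-order components, at least one of them nontrivial, and that component fixes a point of its factor; the analogous argument handles wreath products, treating top-group elements separately. The degree is $16{,}464^{j}\cdot 16{,}464^{k-j}=16{,}464^k$, so the result is admissible. This matches the claimed degree $16{,}464^k$, so the action used must be the imprimitive one (or the direct product); the product action of a wreath product would give a different exponent.

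\textbf{Conclusion and main obstacle.} Each such group is a $2$-closed elusive group, and hence a counterexample to the Polycirculant Conjecture. The main point to verify is that Proposition~\ref{prop:dir_prod} is stated for arbitrary $2$-closed elusive inputs, not only for copies of $X$, so that the induction applies to mixed iterated products. In particular, the wreath product $A\wr B$ must remain elusive when the top group $B$ is elusive but not regular. A prime-order element outside the base group permutes the blocks; since $B$ is elusive, its image in $B$ fixes some block, and one then needs a fixed point inside that block. That step is presumably where Proposition~\ref{prop:dir_prod} uses elusiveness of both $A$ and $B$, and I would rely on it there.
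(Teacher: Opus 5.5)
Your proposal is correct and matches the paper's own deduction: $\kappa(X)$ preserves every $X$-orbital, so $\kappa(X)\le \Aut(\Gamma_O)=X$ by Theorem~\ref{thm:main}, and the infinite family follows inductively from Proposition~\ref{prop:dir_prod} with the imprimitive wreath action. Your induction over the last operation is a clean way to handle arbitrary bracketings. The paper also gives a second proof that $X$ is $2$-closed that avoids the automorphism-group computation of Computational Proposition~\ref{compprop2}: it embeds $\kappa(X)$ in $X_\Delta^\Delta \wr \kappa(X^\Sigma)$ via a system of $56$ blocks, then shows $D\cap\kappa(X)=X$ and $C\cap\kappa(X)=S$, where $D=N_W(S)$.
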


As discussed in Section~\ref{sec:nonsplit}, this corollary also yields a positive answer to \cite[Problem 5.2]{chen2026elusive} on counterexamples to the Polycirculant Conjecture constructed via non-split extensions with quasiprimitive quotients. 

\begin{remark}
\label{rem:otheractions}
The group $X$ in fact has a second faithful transitive elusive action of degree $16{,}464$ (the point stabiliser in the action from Theorem~\ref{thm:main} has exactly 16 normal subgroups, while the point stabiliser in the second action, which also has shape $7^4\mathbin{:}(C_3\times S_3)$, has exactly nine normal subgroups). However, in this second action, the hypotheses of Theorem 5.3 in \cite{cameron2002transitive} apply to the socle $7^6$ of $X$. Thus this theorem implies that the $2$-closure of this socle, which lies in the $2$-closure of $X$, contains a derangement of prime order. Therefore, this action of $X$ satisfies the Polycirculant Conjecture. Furthermore, writing $\mathrm{P}\Gamma\mathrm{U}_3(3)$ to denote the automorphism group $\mathrm{PSU}_3(3)\mathbin{:}2$ of $\mathrm{PSU}_3(3)$, there is a unique elusive permutation group $U$ of degree $16{,}464$ isomorphic to $\nonsplit{7^6}{\mathrm{P}\Gamma\mathrm{U}_3(3)}$ (with $\mathrm{P}\Gamma\mathrm{U}_3(3)$ acting irreducibly on $7^6$). This group contains a copy of $X$ in its second elusive action, and so $U$ again satisfies the Polycirculant Conjecture.
\end{remark}

We also use a graph product construction to extend the graphs from Theorem~\ref{thm:main} to an infinite family of graphs that give a positive answer to the Semiregularity Problem.
\begin{corollary}
\label{cor:graphs}
There exists an infinite family of graphs of orders $16{,}464^k$ for each $k\geq 1$ that are vertex-transitive and do not admit semiregular automorphisms.
\end{corollary}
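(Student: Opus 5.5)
We take $\Gamma$ to be one of the seven connected undirected graphs from Theorem~\ref{thm:main}, so $\Aut(\Gamma)=X$ is elusive and has order $16{,}464$. For each $k\ge 1$ we form a product graph $\Gamma_k$ on vertex set $V(\Gamma)^k$. It is vertex-transitive, and we want its full automorphism group to be a group built from copies of $X$ by direct or wreath products. The elusive-closure part of Proposition~\ref{prop:dir_prod} then shows that this group is elusive. Since every semiregular permutation has a power that is a derangement of prime order, $\Gamma_k$ has no semiregular automorphism, and $|V(\Gamma_k)|=16{,}464^k$.

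The natural first choice is the Cartesian product $\Gamma_k=\Gamma\,\square\,\cdots\,\square\,\Gamma$ with $k$ factors. Its vertex-transitivity is clear. For connected graphs, the Sabidussi--Vizing theorem says the automorphism group of a Cartesian product is generated by automorphisms of the prime factors together with permutations of isomorphic factors. So if $\Gamma$ is prime with respect to the Cartesian product, then $\Aut(\Gamma_k)=\Aut(\Gamma)\wr S_k=X\wr S_k$ in its product action of degree $16{,}464^k$. This product-action wreath product is built from $k$ copies of $X$.

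The main obstacle is checking that $X\wr S_k$ in product action is elusive, and that $\Gamma$ is Cartesian-prime. I would treat elusiveness first, directly or through Proposition~\ref{prop:dir_prod}. Let $g=(x_1,\dots,x_k)\sigma$ have prime order $p$ and suppose it is a derangement. If $\sigma=1$, each $x_i$ has order $1$ or $p$, so each fixes a point because $X$ is elusive, and then $g$ fixes the tuple of these points. If $\sigma\neq 1$, then $\sigma$ is a product of $p$-cycles together with fixed coordinates. On each $p$-cycle $(i_1\,\dots\,i_p)$ the $p$-th power condition says the cycle product $x_{i_1}\cdots x_{i_p}$ is trivial. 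Hence one can choose a fixed point on those coordinates by picking any $\alpha$ and propagating it along the cycle. On the fixed coordinates, $x_i$ has order dividing $p$ and so fixes a point. Thus $g$ is not a derangement, and the wreath product is elusive.

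For Cartesian primality, note that a nontrivial factorisation $\Gamma\cong A\,\square\,B$ would force $16{,}464=|A||B|$. It would also make $\Aut(\Gamma)\ge\Aut(A)\times\Aut(B)$ act imprimitively in product fashion. I would rule this out using the known structure of $X$, for example via the valency $63=\deg A+\deg B$ and the fact that the stabiliser $7^4\mathbin{:}(C_3\times S_3)$ acts on the $63$ neighbours without a compatible splitting. Alternatively, I would confirm it computationally alongside Theorem~\ref{thm:main}.

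If primality proved awkward, I would instead use the direct product $X^k$ with a coloured or arc-labelled product of orbital graphs, which realises the $2$-closure computed in Corollary~\ref{cor:2closed}.
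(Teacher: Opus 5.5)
Your proof is correct in outline but takes a different route from the paper's.

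\textbf{The paper's route.} It takes two \emph{non-isomorphic} graphs $\Gamma_1,\Gamma_2$ from Theorem~\ref{thm:main} and alternates products: $\mathcal{G}_{2i}=\mathcal{G}_{2i-1}\square\Gamma_2$ and $\mathcal{G}_{2i+1}=\mathcal{G}_{2i}\boxtimes\Gamma_1$. It uses three further ingredients:
\begin{enumerate}
\item primality of $\Gamma_1,\Gamma_2$ with respect to \emph{both} products;
\item a computed ``distinct closed neighbourhoods'' property;
\item the fact that a connected Cartesian-decomposable graph is strong-prime and vice versa.
\end{enumerate}
Together these make the two factors relatively prime at every step. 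Each automorphism group is therefore a direct product, and a semiregular automorphism would project to one of a factor.

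\textbf{Your route.} You take Cartesian powers of a single $\Gamma$, obtain $\Aut(\Gamma^{\square k})=X\wr S_k$ in product action, and prove directly that this group is elusive. That argument is sound: cycle products along the $p$-cycles of $\sigma$ are trivial, and elusiveness of $X$ handles the fixed coordinates. It works for any top group.

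\textbf{Comparison.} Your version needs one graph, only the Cartesian product, and no closed-neighbourhood computation. As a bonus it shows $X\wr S_k$ in product action is a $2$-closed elusive group, beyond the constructions of Corollary~\ref{cor:2closed}. The paper's version needs no wreath-product elusiveness lemma, only projections from direct powers of $X$, but pays with the extra hypotheses above.

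\textbf{Cartesian primality needs an actual argument.} Your valency sketch does not yet give one, but it can be completed. Since $\Gamma$ is an orbital graph with $\Aut(\Gamma)=X$, the stabiliser $X_\alpha$ is transitive on the $63$ neighbours of $\alpha$. By the automorphism structure theorem you already cite, automorphisms map layers of a prime factor only to layers of isomorphic prime factors. If $\Gamma$ had non-isomorphic prime factors, the edges at $\alpha$ would split into at least two non-empty $X_\alpha$-invariant classes, which is a contradiction. So $\Gamma\cong A^{\square m}$. This is impossible for $m\ge 2$, because $16{,}464=2^4\cdot 3\cdot 7^3$ is not a perfect power.

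The paper argues differently. A non-prime $\Gamma$ would have automorphism group either a non-trivial direct product or a wreath product with complemented base group. But the only proper non-trivial normal subgroup of $X$ is $N$, and $N$ has no complement.

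\textbf{Smaller corrections.}
\begin{itemize}
\item Proposition~\ref{prop:dir_prod} covers only direct products and \emph{imprimitive} wreath products, so it does not give elusiveness here. Your direct argument carries that step.
\item $16{,}464$ is the degree of $X$, not its order.
\item Your fallback via coloured or arc-labelled products would not yield ordinary graphs, so it would not prove the corollary as stated.
\end{itemize}
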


% \mcl{We further show that $X$ provides a negative answer to the Semiregularity Problem.
% \begin{theorem}
% \label{thm:semireg}
% There is a vertex transitive directed graph $\Gamma$ of order 16,464 with automorphism group isomorphic to $\nonsplit{7^6}{\mathrm{PSU}_3(3)}$ that does not admit any semiregular automorphisms.
% \end{theorem}

%Indeed, the action of $X$ on many of its self-paired orbitals is elusive; the size of the smallest such orbital is the order given in Theorem \ref{thm:semireg}.

As mentioned above, any counterexample to the Polycirculant Conjecture must have each minimal normal subgroup intransitive, with at least one having at least three orbits. Our method of construction of the counterexample $X$ to the conjecture involves the fact that the elementary abelian normal subgroup $N = 7^6$ of $X$ is an irreducible $\mathrm{PSU}_3(3)$-module. Since $\mathrm{PSU}_3(3)$ is simple and $X$ is non-split, this implies that $N$ is the unique minimal normal subgroup of $X$. We note that $N$ has $336$ orbits, and that $X$ has order $711{,}541{,}152 = 2^5.3^3.7^7$ (and degree $16{,}464 = 2^4.3.7^3$, as mentioned above). Additionally, the graphs $\Gamma_O$ from Theorem~\ref{thm:main} each have valency $63$. Recall also that there exist $2$-closed groups that are not the full automorphism groups of any directed graphs. The following open problems therefore naturally arise. Given a permutation group $G$, we write $\xi(G)$ to denote the maximum number of orbits of $M$ among all minimal normal subgroups $M$ of $G$.

\begin{problem}
\label{prob:min}
Among all counterexamples $G$ to the Polycirculant Conjecture, determine:
\begin{enumerate}
\item  the smallest degree of $G$; 
\item the smallest order of $G$; 
\item the least number of orbits in a minimal normal subgroup of $G$; and 
\item the smallest value of $\xi(G)$.
\end{enumerate} Similarly, among all vertex-transitive directed graphs $\Gamma$ admitting no semiregular automorphism, determine:
\begin{enumerate}
    \item[(5)]  the smallest order of $\Gamma$; and
    \item[(6)] the smallest valency (or out-valency) of $\Gamma$.
\end{enumerate}
\end{problem}

% \begin{problem}
% Among all counterexamples $G$ to the Polycirculant Conjecture, determine the smallest degree of $G$, the smallest order of $G$, the least number of orbits in a minimal normal subgroup of $G$, and the smallest value of $\xi(G)$. Similarly, among all vertex-transitive directed graphs $\Gamma$ admitting no semiregular automorphism, determine the smallest valency (or out-valency) of $\Gamma$.
% \end{problem}

Note that the aforementioned survey paper \cite{arezoomand2019problems} includes results on necessary conditions on the valency and other properties of such a graph $\Gamma$.

\begin{problem}
Does there exist a counterexample $G$ to the Polycirculant Conjecture, such that $G$ is not the full automorphism group of a vertex-transitive directed graph?
\end{problem}

If no such $G$ exists, then Parts (1) and (5) of Problem~\ref{prob:min} have the same answer.

The remainder of the paper is organised as follows. Section \ref{sec:prelims} outlines some preliminary results, and we discuss constructions of elusive groups from non-split extensions in Section~\ref{sec:nonsplit}. The proofs of Theorem~\ref{thm:main} and Corollary~\ref{cor:graphs} are then detailed in Section \ref{sec:prf}. We note that the former proof is essentially entirely computational. As such, Section~\ref{sec:prf} also includes a proof of Corollary~\ref{cor:2closed}, combining theoretical and computational techniques, that does not rely on Theorem~\ref{thm:main}. The techniques in this proof may be useful when considering further potential counterexamples to the Polycirculant Conjecture that are not amenable to the computational methods used to prove Theorem~\ref{thm:main}, in particular those that may not be the full automorphism group of any vertex-transitive directed graph.

\section{Preliminaries}
\label{sec:prelims}
We first introduce some notation. For a permutation group $G \leq \mathrm{Sym}(\Omega)$, and $\Sigma$ a $G$-invariant partition of $G$ on $\Omega$, we write $G^\Sigma$ to denote the subgroup of $\mathrm{Sym}(\Sigma)$ induced by $G$ on $\Sigma$. For $\Delta \in \Sigma$, we write $G_\Delta$ for the setwise stabiliser of $\Delta$ in $G$, and $G_\Delta^\Delta$ for the subgroup of $\mathrm{Sym}(\Delta)$ induced by $G_\Delta$ on $\Delta$. The $2$-closure of $G$ is denoted by $\kappa(G)$, where the action is clear from context.

Our proof of Corollary \ref{cor:2closed} requires the following structural results.

\begin{lemma}[{{\cite[Lemma 3.1]{totally2closed}}}]
\label{lem:part}
Let $G\leq \mathrm{Sym}(\Omega)$ be a transitive group, and suppose that $\Sigma$ is a non-trivial $G$-invariant partition of $\Omega$. Then the following statements hold.
\begin{enumerate}
    \item The partition $\Sigma$ is also $\kappa(G)$-invariant.
    \item The induced group $\kappa(G)^\Sigma$ is a subgroup of $\kappa(G^\Sigma)$.
    \item For $\Delta\in \Sigma$, the group $\kappa(G)_\Delta^\Delta$ is a subgroup of $\kappa(G_\Delta^\Delta)$.
\end{enumerate}
\end{lemma}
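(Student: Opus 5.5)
The plan is to work directly from the characterisation of $\kappa(G)$ as the largest subgroup of $\mathrm{Sym}(\Omega)$ preserving every orbital of $G$. Equivalently, $\kappa(G)$ is the intersection of $\mathrm{Aut}(\Gamma_O)$ over all orbitals $O$ of $G$. All three parts should then follow by exhibiting suitable relations as unions of $G$-orbitals; $\kappa(G)$ must preserve these, since it preserves each orbital.

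For (1), I define the relation $R=\{(\alpha,\beta)\in\Omega\times\Omega : \alpha,\beta \text{ lie in the same block of } \Sigma\}$. Because $\Sigma$ is $G$-invariant, $R$ is $G$-invariant, so it is a union of $G$-orbitals. Hence $\kappa(G)$ preserves $R$, which is exactly the statement that $\kappa(G)$ maps blocks of $\Sigma$ to blocks of $\Sigma$.

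For (2), by (1) there is an induced action of $\kappa(G)$ on $\Sigma$. Take an orbital $P$ of $G^\Sigma$ on $\Sigma\times\Sigma$, and let $\tilde P=\{(\alpha,\beta): (\Delta_\alpha,\Delta_\beta)\in P\}$, where $\Delta_\alpha$ denotes the block containing $\alpha$. The set $\tilde P$ is $G$-invariant, hence a union of $G$-orbitals, and so it is preserved by $\kappa(G)$. Since each block is nonempty, $(\Delta,\Delta')\in P$ if and only if some (equivalently every) pair $(\alpha,\beta)\in\Delta\times\Delta'$ lies in $\tilde P$. Therefore $\kappa(G)^\Sigma$ preserves $P$. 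As this holds for every orbital of $G^\Sigma$, we get $\kappa(G)^\Sigma\le\kappa(G^\Sigma)$.

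For (3), fix $\Delta\in\Sigma$ and $x\in\kappa(G)_\Delta$, and let $Q$ be an orbital of $G_\Delta^\Delta$ on $\Delta\times\Delta$. I need to show that $x|_\Delta$ preserves $Q$. This is the main subtlety, because $Q$ need not be the restriction of a $G$-orbital in an obvious way. I set $O_Q=(Q)^G\subseteq\Omega\times\Omega$, the union of $G$-orbitals containing pairs of $Q$, and claim that $O_Q\cap(\Delta\times\Delta)=Q$. Indeed, suppose $(\alpha,\beta)^g\in\Delta\times\Delta$ with $(\alpha,\beta)\in Q$. Then $\alpha^g\in\Delta\cap\Delta^g$, so $\Delta^g=\Delta$ because $\Sigma$ is a partition, and therefore $g\in G_\Delta$ and $(\alpha,\beta)^g\in Q$. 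Now $x$ preserves $O_Q$, since this set is a union of $G$-orbitals, and $x$ fixes $\Delta$ setwise, so $x$ preserves $O_Q\cap(\Delta\times\Delta)=Q$. Hence $\kappa(G)_\Delta^\Delta\le\kappa(G_\Delta^\Delta)$.

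The only step needing care is the intersection identity in (3), which rests on the block property $\Delta^g\cap\Delta\ne\emptyset\Rightarrow\Delta^g=\Delta$. The transitivity and non-triviality hypotheses are not really needed beyond ensuring the objects are well-defined.
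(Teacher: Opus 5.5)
Your proof is correct. The paper gives no proof of this lemma and simply quotes it from \cite[Lemma 3.1]{totally2closed}, so there is no internal argument to match it against. Each step holds:
\begin{itemize}
\item In (1), $\kappa(G)$ fixes $R$ setwise, not merely maps it into itself, since $\kappa(G)$ is a group. Hence blocks are mapped onto blocks.
\item In (2), (1) gives $\Delta_{\alpha^x}=\Delta^x$, so $(\alpha^x,\beta^x)\in\tilde P$ yields $(\Delta^x,\Delta'^x)\in P$.
\item In (3), the identity $Q^G\cap(\Delta\times\Delta)=Q$ is precisely where the block property is needed, and you verify it correctly.
\end{itemize}
You are also right that transitivity of $G$ and non-triviality of $\Sigma$ are never used.

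There is a slightly more economical route via the pointwise criterion of Proposition~\ref{prop:wielandt}, which the paper states immediately afterwards. Given $x\in\kappa(G)$ and $\alpha,\beta\in\Omega$, choose $g\in G$ with $\alpha^x=\alpha^g$ and $\beta^x=\beta^g$.
\begin{itemize}
\item If $\alpha,\beta$ lie in a common block $\Delta$, then $\alpha^x,\beta^x\in\Delta^g$, which gives (1).
\item If instead $\alpha\in\Delta_1$ and $\beta\in\Delta_2$ are arbitrary, then $\Delta_1^x=\Delta_1^g$ and $\Delta_2^x=\Delta_2^g$. The same criterion applied to $G^\Sigma$ then gives (2).
\item If $x\in\kappa(G)_\Delta$ and $\alpha,\beta\in\Delta$, then $\alpha^g=\alpha^x\in\Delta\cap\Delta^g$ forces $g\in G_\Delta$. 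So $x|_\Delta$ agrees with $g|_\Delta$ on $\{\alpha,\beta\}$, which gives (3).
\end{itemize}
This avoids constructing the auxiliary relations $R$, $\tilde P$ and $Q^G$, but the key mechanism is the same as yours, namely $\Delta^g\cap\Delta\neq\emptyset\Rightarrow g\in G_\Delta$. It is also exactly the step the paper later uses to force $x\in T_{ij}$ in the proof of Corollary~\ref{cor:2closed}. Your relation-based formulation has the merit of making transparent that only the $G$-invariance of the relevant relations on $\Omega\times\Omega$ is used.
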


%The following theorem is known as the Imprimitive Wreath Embedding Theorem.
\begin{theorem}[Imprimitive Wreath Embedding Theorem]
\label{thm:imprim_wr}
Let $G\leq \mathrm{Sym}(\Omega)$, let $\Sigma$ be a $G$-invariant partition of $\Omega$ with finitely many blocks, and let $\Delta \in \Sigma$. Further assume that the group $G^\Sigma$ induced by $G$ on $\Sigma$ is transitive. Then $G$ is permutationally isomorphic to a subgroup of $G_\Delta^\Delta \wr G^\Sigma$ acting on $\Delta \times \Sigma$.
\end{theorem}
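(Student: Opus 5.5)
The plan is to identify $\Omega$ with $\Delta \times \Sigma$ and embed $G$ into the wreath product by fixing coset representatives for the blocks. Write $\Sigma = \{\Delta_1,\dots,\Delta_m\}$ with $\Delta_1=\Delta$. Since $G^\Sigma$ is transitive, for each $i$ I will choose some $t_i\in G$ with $\Delta^{t_i}=\Delta_i$, taking $t_1=1$. Then $t_i$ restricts to a bijection $\Delta\to\Delta_i$. Because $\Sigma$ partitions $\Omega$, the map
\[
\varphi\colon \Delta\times\Sigma\to\Omega,\qquad (\delta,\Delta_i)\mapsto \delta^{t_i},
\]
is a bijection. I will then transport the action of $G$ through $\varphi$ and check that each $g\in G$ acts as an element of $G_\Delta^\Delta\wr G^\Sigma$ in its product (imprimitive) action on $\Delta\times\Sigma$.

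The key computation is as follows. For $g\in G$ and each $i$, let $j$ be the index with $\Delta_i^g=\Delta_j$. The element $h_i(g):=t_i g t_j^{-1}$ maps $\Delta$ to $\Delta_i$, then to $\Delta_j$, then back to $\Delta$. Hence $h_i(g)\in G_\Delta$, and it induces a permutation $\overline{h_i(g)}\in G_\Delta^\Delta$. It follows that
\[
(\delta^{t_i})^g=\bigl(\delta^{h_i(g)}\bigr)^{t_j},
\]
so under $\varphi$ the element $g$ acts by $(\delta,\Delta_i)\mapsto(\delta^{\overline{h_i(g)}},\Delta_i^{g})$. This is exactly the action of the wreath product element $(\overline{h_1(g)},\dots,\overline{h_m(g)})\,g^\Sigma$, where the base group is indexed by $\Sigma$ and the top group is $G^\Sigma$.

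Finally, I will define $\Phi(g)$ to be this wreath element. A permutation of $\Delta\times\Sigma$ determines its wreath coordinates uniquely: the top component is read off from the action on the second coordinate, and each base component from the action on the fibres. Therefore $\Phi(g)$ is just the conjugate $\varphi g\varphi^{-1}$, viewed as a permutation of $\Delta\times\Sigma$. Conjugation is an injective homomorphism $\mathrm{Sym}(\Omega)\to\mathrm{Sym}(\Delta\times\Sigma)$, so $\Phi$ is injective and multiplicative without any separate cocycle check. I expect no real obstacle here. The only care needed is with conventions: right actions, consistent ordering in $t_igt_j^{-1}$, and matching the wreath product's multiplication so that the image really lies in $G_\Delta^\Delta\wr G^\Sigma$ and not some twisted version. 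Finiteness of $\Sigma$ is used only so that the wreath product is the usual finite one.
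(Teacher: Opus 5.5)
Your proposal is correct. You choose $t_i\in G$ with $\Delta^{t_i}=\Delta_i$, identify $\Omega$ with $\Delta\times\Sigma$ via $(\delta,\Delta_i)\mapsto\delta^{t_i}$, and observe that $t_igt_j^{-1}\in G_\Delta$; this gives the embedding $g\mapsto\bigl(\overline{t_igt_j^{-1}}\bigr)_i\,g^\Sigma$. Your remark that this is just conjugation by $\varphi$, together with the faithfulness of the imprimitive action of $G_\Delta^\Delta\wr G^\Sigma$, legitimately removes the need for a separate homomorphism check. The paper gives no proof of its own and instead cites Praeger--Schneider (Theorem 5.5) and Hulpke's notes, which use essentially this transversal construction. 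Your wreath coordinates also match the convention $(\delta,i)^{(a_1,\ldots,a_m)b}=(\delta^{a_i},i^b)$ used later in the paper.
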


See {\cite[Theorem 5.5]{praegerschneider}} for a proof, which details a permutational isomorphism that can be used to embed the group $G$ in the specified wreath product. A more direct description of the permutational embedding of $G$ is given in \cite[Section II.4]{hulpkenotes}.

Our next result is the case $k = 2$ of the theorem {\cite[Theorem 5.6]{wielandt}} on \emph{$k$-closures} of permutation groups for each $k \ge 1$.
\begin{proposition}
\label{prop:wielandt}
Let $G\leq \mathrm{Sym}(\Omega)$. A permutation $\sigma \in \mathrm{Sym}(\Omega)$ lies in $\kappa(G)$ if and only if for all $\alpha, \beta \in \Omega$, there exists $g\in G$ with $\alpha^\sigma=\alpha^g$ and $\beta^\sigma=\beta^g$. 
\end{proposition}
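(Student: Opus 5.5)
We prove the two implications of Proposition~\ref{prop:wielandt} separately, working directly from the definition of $\kappa(G)$ as the largest subgroup of $\mathrm{Sym}(\Omega)$ with the same orbits on $\Omega\times\Omega$ as $G$. Let $K$ denote the set of permutations $\sigma$ satisfying the stated condition: for all $\alpha,\beta\in\Omega$ there is $g\in G$ with $\alpha^\sigma=\alpha^g$ and $\beta^\sigma=\beta^g$. Equivalently, $\sigma\in K$ if and only if $\sigma$ maps every pair $(\alpha,\beta)$ into its own $G$-orbital, that is, $\sigma$ fixes every orbital of $G$ setwise. The plan is to show $K=\kappa(G)$.

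First, suppose $\sigma\in\kappa(G)$. Since $G\le\kappa(G)$ and $\kappa(G)$ has the same orbits on $\Omega\times\Omega$ as $G$, the pair $(\alpha,\beta)^\sigma$ lies in the $\kappa(G)$-orbit of $(\alpha,\beta)$. This orbit equals the $G$-orbit of $(\alpha,\beta)$. Hence there is $g\in G$ with $(\alpha,\beta)^g=(\alpha,\beta)^\sigma$, and so $\sigma\in K$.

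Conversely, we check three things about $K$.
\begin{enumerate}
\item $K$ is a subgroup of $\mathrm{Sym}(\Omega)$. Each orbital $O$ is a finite set with $O^\sigma\subseteq O$ for $\sigma\in K$. Since $\sigma$ is injective, $O^\sigma=O$, so $O^{\sigma^{-1}}=O$. Closure under composition is immediate, since the set of permutations preserving each orbital is closed under products.
\item $G\le K$. This is trivial: for $\sigma\in G$ take $g=\sigma$.
\item $K$ has the same orbits on $\Omega\times\Omega$ as $G$. Each $K$-orbit contains a $G$-orbit because $G\le K$. Each $K$-orbit is also contained in a $G$-orbit because $K$ preserves every orbital. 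Hence the orbits coincide.
\end{enumerate}
By the maximality in the definition of the $2$-closure, $K\le\kappa(G)$. This gives $K=\kappa(G)$.

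The only point needing care is the well-definedness of ``largest subgroup'' in the definition of $\kappa(G)$, since maximality is what drives the converse inclusion. The argument above already settles this: any subgroup $H$ with the same orbitals as $G$ preserves each $G$-orbital, so $H\le K$. Thus $K$ itself is the unique largest such subgroup, and no separate existence argument is needed.
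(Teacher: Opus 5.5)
Your proof is correct. The paper gives no argument of its own here; it simply quotes the case $k=2$ of Wielandt's theorem on $k$-closures.

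You instead derive the criterion directly from the definition of $\kappa(G)$ given in the introduction, namely the largest subgroup of $\mathrm{Sym}(\Omega)$ with the same orbitals as $G$. You show that the set $K$ of permutations fixing every $G$-orbital setwise is a subgroup containing $G$ with the same orbitals as $G$. You then show that every such subgroup lies in $K$.

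The citation buys brevity and the general $k$-closure statement. Your route buys three things:
\begin{itemize}
\item It is self-contained.
\item It checks that the ``largest'' subgroup in the definition actually exists, which the paper takes for granted.
\item It carries over verbatim to $k$-tuples, so no generality is lost.
\end{itemize}
It also makes explicit a fact the paper uses without comment: $\kappa(G)$ is the intersection of the automorphism groups of the orbital digraphs of $G$. Hence $X=\Aut(\Gamma_O)$ for a single $X$-orbital $O$ already gives $\kappa(X)=K\le \Aut(\Gamma_O)=X$.

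One small remark on your step 1: finiteness of $O$ is not needed. The orbitals partition $\Omega\times\Omega$ and $\sigma$ permutes $\Omega\times\Omega$, so having $O^\sigma\subseteq O$ for every orbital $O$ already forces $O^\sigma=O$.
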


The following proposition shows that any $2$-closed elusive group gives rise to an infinite family of $2$-closed elusive groups.
\begin{proposition}
\label{prop:dir_prod}
Let $G_1\leq \mathrm{Sym}(\Omega_1)$ and $G_2 \leq \mathrm{Sym}(\Omega_2)$ be  transitive permutation groups.
\begin{enumerate}
    \item \label{i1} If $G_1$ and $G_2$ are elusive, then $G_1\times G_2$ is elusive in its (product) action on $\Omega_1 \times \Omega_2$, and $G_1 \wr G_2$ is elusive in its (imprimitive) action on $\Omega_1 \times \Omega_2$.
        \item \label{i2} If $G_1$ and $G_2$ are $2$-closed, then $G_1\times G_2$ and $G_1 \wr G_2$ are $2$-closed in their actions on $\Omega_1 \times \Omega_2$.
\end{enumerate}
\end{proposition}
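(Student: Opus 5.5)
For part~(\ref{i1}), I would first record a basic fact. A permutation of prime order $p$ is a derangement exactly when it is semiregular, meaning all of its cycles have length $p$.

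\emph{Direct product.} Take $(g_1,g_2)\in G_1\times G_2$ of prime order $p$. Then each $g_i$ has order $1$ or $p$, and not both are trivial. Since $(g_1,g_2)$ fixes $(\alpha_1,\alpha_2)$ exactly when $g_i$ fixes $\alpha_i$ for each $i$, it suffices to find a fixed point of each $g_i$. If $g_i=1$, every point is fixed. If $g_i$ has order $p$, it is not a derangement because $G_i$ is elusive, so it has a fixed point. Hence $(g_1,g_2)$ has a fixed point.

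\emph{Wreath product.} Write an element of $G_1\wr G_2$ as $(f,h)$, with $f\colon\Omega_2\to G_1$ and $h\in G_2$, acting by $(\alpha,\beta)\mapsto(\alpha^{f(\beta)},\beta^h)$. Suppose $(f,h)$ has prime order $p$.
\begin{itemize}
\item If $h=1$, then every $f(\beta)$ has order dividing $p$. Choose $\beta$ arbitrarily. Then $f(\beta)$ fixes some $\alpha$, either trivially or by elusiveness of $G_1$, and $(\alpha,\beta)$ is fixed.
\item If $h\neq 1$, then $h$ has order $p$, so by elusiveness of $G_2$ it fixes some $\beta$. The $p$-th power of $(f,h)$ restricted to the block $\Omega_1\times\{\beta\}$ acts as $f(\beta)^p$, which must be trivial. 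So $f(\beta)$ has order $1$ or $p$, and as before it fixes some $\alpha$. Then $(\alpha,\beta)$ is fixed.
\end{itemize}
Transitivity of both constructions is standard.

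For part~(\ref{i2}), I would use the criterion of Proposition~\ref{prop:wielandt}.

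\emph{Direct product.} Let $\sigma\in\kappa(G_1\times G_2)$. The partitions of $\Omega_1\times\Omega_2$ into sets $\{\alpha_1\}\times\Omega_2$, and into sets $\Omega_1\times\{\alpha_2\}$, are both invariant under $G_1\times G_2$. The relation ``same first coordinate'' is a union of orbitals, so $\sigma$ preserves it, and likewise for the second coordinate. Hence $\sigma$ induces permutations $\sigma_1$ of $\Omega_1$ and $\sigma_2$ of $\Omega_2$ with $\sigma=(\sigma_1,\sigma_2)$.

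Now apply Proposition~\ref{prop:wielandt} to the pair of points $(\alpha_1,\alpha_2)$ and $(\beta_1,\alpha_2)$. It gives $(g_1,g_2)$ with $\alpha_1^{\sigma_1}=\alpha_1^{g_1}$ and $\beta_1^{\sigma_1}=\beta_1^{g_1}$. So $\sigma_1\in\kappa(G_1)=G_1$, and symmetrically $\sigma_2\in G_2$.

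\emph{Wreath product.} Let $\sigma\in\kappa(G_1\wr G_2)$. The block system $\Sigma=\{\Omega_1\times\{\beta\}\}$ is preserved by $\sigma$ by Lemma~\ref{lem:part}. By parts~(2) and~(3) of that lemma, the following hold.
\begin{itemize}
\item The group $\sigma$ induces on $\Sigma$ lies in $\kappa(G_2)=G_2$.
\item The stabiliser of a block in $G_1\wr G_2$ induces $G_1$ on that block, and $G_1$ is $2$-closed. So each restriction of $\sigma$, taking one block to another and identified via coordinates, lies in $\kappa(G_1)=G_1$.
\end{itemize}
To make the second point precise, I would multiply $\sigma$ by an element of the form $(1,h^{-1})$ so that it fixes every block. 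For each $\beta$, the restriction to the block $\Omega_1\times\{\beta\}$ then lies in $G_1$. This gives a function $f\colon\Omega_2\to G_1$, so $\sigma\in G_1\wr G_2$.

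The main care point is this identification of each block restriction with an element of $G_1$. Since the base group is the full product $G_1^{\Omega_2}$, the restrictions to different blocks can be chosen independently, so there is no compatibility condition between blocks. This is what makes $G_1\wr G_2$, rather than some subgroup of it, the $2$-closure.
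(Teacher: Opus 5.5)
Your proof is correct. The paper gives no argument of its own here: it simply quotes Theorems 4.1(c) and 5.1 of \cite{cameron2002transitive}. Your write-up is therefore a genuinely different, self-contained route, built only from tools already stated in the paper.

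In (1), the key point is that an element of prime order $p$ has all relevant components of order $1$ or $p$. In the direct product these are its two coordinates. In the wreath case they are the top component $h$, and then $f(\beta)$ on a block fixed by $h$, since $(f,h)^p=1$ forces $f(\beta)^p=1$. A fixed point can then be assembled coordinatewise from the elusiveness of the factors.

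In (2), you combine Proposition~\ref{prop:wielandt} with the observation that the coordinate relations are unions of orbitals. For the wreath product you add Lemma~\ref{lem:part}(2),(3) and the fact that the base group is the full product $G_1^{\Omega_2}$, so the block restrictions can be chosen independently.

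The citation buys brevity. Your version shows exactly where $2$-closure of each factor enters, and it shows that transitivity is irrelevant for (2). You can replace Lemma~\ref{lem:part} by direct applications of Proposition~\ref{prop:wielandt}: once to two points in a common block, and once to points $(\alpha,\beta),(\alpha,\beta')$ in different blocks. This gives $\kappa(G_1\wr G_2)\le\kappa(G_1)\wr\kappa(G_2)$ for arbitrary $G_1,G_2$. It also sidesteps the non-triviality hypothesis on $\Sigma$ in Lemma~\ref{lem:part}, which fails in the degenerate cases $|\Omega_1|=1$ or $|\Omega_2|=1$.
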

\begin{proof}
Part (\ref{i1}) is \cite[Theorem 4.1(c)]{cameron2002transitive}, while Part (\ref{i2}) is \cite[Theorem 5.1]{cameron2002transitive}.
\end{proof}

We also need some theory concerning graph products to provide the machinery for our proof of Corollary \ref{cor:graphs}.
Let $\Gamma_1=(V_1,E_1)$ and $\Gamma_2=(V_2,E_2)$ be two simple graphs, i.e.~without loops or multiple edges. We recall two well-known graph products. The \emph{Cartesian product} $\Gamma_1 \square \Gamma_2$ and \emph{strong product} $\Gamma_1 \boxtimes \Gamma_2$ each have vertex set
\[
V(\Gamma_1 \square \Gamma_2) = V(\Gamma_1 \boxtimes \Gamma_2) = \{(v,w) \mid v\in V(\Gamma_1), w \in V(\Gamma_2)\}.
\]
Two distinct vertices $(v,w)$ and $(v',w')$ are adjacent in the Cartesian product if and only if either $v=v'$ and $w$ is adjacent to $w'$ in $\Gamma_2$, or $w = w'$ and $v$ is adjacent to $v'$ in $\Gamma_1$. They are adjacent in the strong product if and only if $v$ and $v'$ are either equal or adjacent in $\Gamma_1$ and $w$ and $w'$ are either equal or adjacent in $\Gamma_2$. Hence $\Gamma_1 \square \Gamma_2$ is a subgraph of $\Gamma_1 \boxtimes \Gamma_2$.

A graph is called \emph{prime} with respect to one of these graph products if it is non-trivial and cannot be written as the product of two non-trivial graphs. If a graph is non-trivial and not prime, we say that it is \emph{decomposable} with respect to that product. We further say that two graphs are \emph{relatively prime} with respect to a graph product if there is no non-trivial graph that (up to isomorphism) is a factor of both of them. We now prove some properties of graphs constructed from Cartesian and strong products.

\begin{proposition}
\label{prop:prod_props}
Let $\Gamma_1=(V_1,E_1)$ and $\Gamma_2=(V_2,E_2)$ be simple graphs on at least two vertices. Then the following statements hold.
\begin{enumerate}
\item \label{p1} The products $\Gamma_1 \square \Gamma_2$ and $\Gamma_1 \boxtimes \Gamma_2$ are connected if and only if $\Gamma_1$ and $\Gamma_2$ are, and vertex-transitive if and only if $\Gamma_1$ and $\Gamma_2$ are.
%\item \label{p0} Either $\Gamma_1 \boxtimes \Gamma_2$ is disconnected, or it has a complete subgraph with at least four vertices.
    \item \label{p2} If, for each $i \in \{1,2\}$, no two vertices in $\Gamma_i$ have the same closed neighbourhoods, then no two vertices in $\Gamma_1 \square  \Gamma_2$ have the same closed neighbourhoods. Separately, if $\Gamma_1$ and $\Gamma_2$ are connected and relatively prime with respect to the Cartesian product, then 
    $\Gamma_1 \square  \Gamma_2$ admits semiregular automorphisms only if at least one of $\Gamma_1$ and $\Gamma_2$ do.
    \item \label{p3} If, for each $i \in \{1,2\}$, no two vertices in $\Gamma_i$ have the same closed neighbourhoods, then no two vertices in $\Gamma_1 \boxtimes  \Gamma_2$ have the same closed neighbourhoods. If, in addition to this property, $\Gamma_1$ and $\Gamma_2$ are connected and relatively prime with respect to the strong product, then $\Gamma_1 \boxtimes  \Gamma_2$ admits semiregular automorphisms only if at least one of $\Gamma_1$ and $\Gamma_2$ do.
\end{enumerate}
\end{proposition}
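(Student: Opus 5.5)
The plan is to treat the three parts separately and, for the deeper claims, to rely on the classical unique prime factorisation theorems for connected graphs together with the known descriptions of automorphism groups of Cartesian and strong products.

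\textbf{Part (1).} This is elementary. Connectivity follows by projecting walks onto each coordinate, and conversely by concatenating coordinate-wise walks. In the strong product, a coordinate that "stays" may simply stay equal while the other coordinate moves. If both factors are vertex-transitive, then $\Aut(\Gamma_1)\times\Aut(\Gamma_2)$ acts transitively on the product. For the converse, I would use the automorphism-group descriptions quoted below. A more elementary route for the converse is to observe that vertex-transitivity of the product forces equal degree sums, which only gives regularity, so the structure theorem is genuinely needed. Since the paper only uses the "if" directions, I will note the converse via the product decomposition.

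\textbf{Closed neighbourhoods in Part (2).} Write $N[(v,w)]$ for the closed neighbourhood in $\Gamma_1\square\Gamma_2$. The vertices of $N[(v,w)]$ with first coordinate $v$ are exactly $\{v\}\times N_{\Gamma_2}[w]$. Suppose $N[(v,w)]=N[(v',w')]$ with $(v,w)\neq(v',w')$. Then $(v',w')$ is adjacent to $(v,w)$, say with $v=v'$. Intersecting with the fibre $\{v\}\times V_2$ gives $N_{\Gamma_2}[w]=N_{\Gamma_2}[w']$, so $w=w'$, a contradiction.

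\textbf{Closed neighbourhoods in Part (3).} For the strong product we have $N[(v,w)]=N_{\Gamma_1}[v]\times N_{\Gamma_2}[w]$. Both factors are nonempty, so this product set determines each factor. Hence equality of closed neighbourhoods forces $v=v'$ and $w=w'$.

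\textbf{Semiregularity in Part (2).} I invoke the theorem of Imrich and Sabidussi (see Hammack--Imrich--Klav\v{z}ar, Handbook of Product Graphs): if $\Gamma_1,\Gamma_2$ are connected and relatively prime with respect to $\square$, then $\Aut(\Gamma_1\square\Gamma_2)=\Aut(\Gamma_1)\times\Aut(\Gamma_2)$ acting coordinatewise. Suppose $\varphi=(\varphi_1,\varphi_2)$ is semiregular. Replacing $\varphi$ by a suitable power, we may assume it has prime order $p$, and semiregular elements of prime order are exactly fixed-point-free ones. A point $(v,w)$ is fixed exactly when both $v$ and $w$ are fixed. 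If $\varphi_1$ fixed some $v$ and $\varphi_2$ fixed some $w$, then $\varphi$ would fix $(v,w)$; hence one of them, say $\varphi_1$, is fixed-point-free. Since $\varphi$ has order $p$, $\varphi_1$ has order $1$ or $p$. It cannot be the identity, since the identity has fixed points. So $\varphi_1$ has prime order and is fixed-point-free, hence semiregular.

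\textbf{Semiregularity in Part (3).} The same argument works once the analogous theorem for the strong product is available. For connected graphs in which no two vertices have the same closed neighbourhood (the "S-thin" or "R-thin" condition), and which are relatively prime with respect to $\boxtimes$, we have $\Aut(\Gamma_1\boxtimes\Gamma_2)=\Aut(\Gamma_1)\times\Aut(\Gamma_2)$. This is the Dörfler/Imrich result for thin graphs, and it is exactly why the closed-neighbourhood hypothesis appears in Part (3).

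\textbf{Main obstacle.} The main obstacle is correctly citing and matching the hypotheses of these automorphism theorems. In particular, it must be checked that relative primality rules out automorphisms swapping isomorphic prime factors across $\Gamma_1$ and $\Gamma_2$. It must also be checked, via the product of the thinness property proved above, that the product graph is itself thin, so that the strong-product theorem applies without quotienting by the twin relation.
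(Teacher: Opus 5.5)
Your arguments for Parts (2) and (3) are correct and follow the paper's route. So are the connectivity claim and the \emph{if} half of vertex-transitivity in Part (1). In both cases you cite $\Aut(\Gamma_1\square\Gamma_2)=\Aut(\Gamma_1)\times\Aut(\Gamma_2)$ for connected relatively prime factors, or its strong-product analogue for factors with distinct closed neighbourhoods, and then project. Your explicit closed-neighbourhood computations and the passage to a prime-order power before projecting make precise what the paper leaves implicit. A coordinate of a semiregular element need not itself be semiregular: coordinates with cycle types $(2,6)$ and $(3,6)$ give a product all of whose cycles have length $6$.

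The gap is the \emph{only if} half of vertex-transitivity in Part (1), which the statement asserts but you do not prove. The tool you propose for it cannot deliver it. The direct-product description of the automorphism group is available only when $\Gamma_1$ and $\Gamma_2$ are relatively prime (and, for $\boxtimes$, have distinct closed neighbourhoods), but the converse must hold without these hypotheses. For example, if $\Gamma_1\cong\Gamma_2$, the product has automorphisms interchanging the coordinates. Then an automorphism carrying $(v,w)$ to $(v',w)$ need not induce an automorphism of $\Gamma_1$ carrying $v$ to $v'$.

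What is needed is the general description of automorphisms of a product of connected graphs via its prime factorisation: automorphisms of the prime factors combined with permutations of isomorphic prime factors. For $\boxtimes$ this also requires a preliminary reduction to the quotient by the equal-closed-neighbourhood relation, since Part (1) has no thinness hypothesis. From that description one shows that every prime factor, and hence each $\Gamma_i$, is vertex-transitive. If disconnected factors are allowed, you must also first show, e.g.\ via cancellation, that all components of each $\Gamma_i$ are isomorphic before reducing to the connected case. The paper gets this direction by citing Theorems 6.17 and 7.19 of \cite{MR2817074}. The direction is never used later in the paper, but as written your proof of Part (1) is incomplete.
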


\begin{proof}
Part~(\ref{p1}) follows immediately from Corollaries 5.3 and 5.6 and Theorems 6.17 and 7.19 in \cite{MR2817074}.
%Now, if $\Gamma_1$ and $\Gamma_2$ contain at least one edge each, then applying the strong product definition to the vertices arising from an edge from each of $\Gamma_1$ and $\Gamma_2$ gives a complete graph on 4 vertices. If at least one of $\Gamma_1$ and $\Gamma_2$ is disconnected (in particular if at least one is empty), then Part~(\ref{p1}) shows that $\Gamma_1 \boxtimes \Gamma_2$ is as well, and Part~(\ref{p0}) follows.
Next, we consider Part (\ref{p2}). The statement about closed neighbourhoods in $\Gamma_1 \square \Gamma_2$ is immediate from the definition of the Cartesian product. Suppose now that $\Gamma_1$ and $\Gamma_2$ are connected and relatively prime with respect to the Cartesian product. Then \cite[Corollary 6.12]{MR2817074} yields $\mathrm{Aut}(\Gamma_1 \square  \Gamma_2) = \mathrm{Aut}(\Gamma_1)\times\mathrm{Aut}(\Gamma_2) $. Hence if $\Gamma_1 \square \Gamma_2$ admits a semiregular automorphism, then by considering this automorphism's projection onto each of $\Gamma_1$ and $\Gamma_2$, we see that at least one of these two factors also admits such an automorphism. The proof of Part (\ref{p3}) is similar, instead using \cite[Lemma 7.2, Corollary 7.17]{MR2817074}.
\end{proof}
We now provide the key result that will enable us to construct our desired family of graphs.
\begin{proposition}[{\cite[Exercise 7.6]{MR2817074}}]
\label{prop:pingpong}
Every connected graph that is decomposable with respect to the Cartesian product is prime with respect to the strong product and vice versa.
\end{proposition}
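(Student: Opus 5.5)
The plan is to prove both directions by the same argument: use unique factorisation into primes for connected graphs, compare the number of vertices, and exploit the fact that a product with many edges cannot also be a product of another type with few edges. I will first treat the case where $\Gamma$ is connected and Cartesian-decomposable, say $\Gamma = A \square B$ with $A,B$ non-trivial. Since $\Gamma$ is connected, $A$ and $B$ are connected by Proposition~\ref{prop:prod_props}(\ref{p1}). Suppose, for contradiction, that $\Gamma = C \boxtimes D$ as well, with $C,D$ non-trivial. Again $C$ and $D$ must be connected, since otherwise $\Gamma$ would be disconnected.

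The key step is a local triangle/clique comparison. In $C\boxtimes D$, pick an edge $c c'$ in $C$ and an edge $d d'$ in $D$; both exist because the factors are connected with at least two vertices. Then $\{(c,d),(c,d'),(c',d),(c',d')\}$ induces a $K_4$. So every edge $(c,d)(c',d)$ lies in a triangle with $(c,d')$, and every vertex lies in a $4$-clique.

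Now compare this with the Cartesian product $A\square B$. Any triangle of $A\square B$ must lie in a single layer: if a triangle had two edges in different coordinate directions, the third pair of vertices would differ in both coordinates and hence be non-adjacent. Consider the $K_4$ above. All of its triangles lie in layers, and overlapping triangles share an edge, which fixes the direction of the layer. Hence the whole $K_4$ lies in one layer, say an $A$-layer $A\times\{b\}$.

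The main obstacle is to turn this local statement into a global contradiction. I would try an argument on the two edge classes. Call an edge of $C\boxtimes D$ ``$C$-type'' if only its first coordinate changes, ``$D$-type'' if only its second coordinate changes, and ``diagonal'' otherwise. The $K_4$ argument applies to every pair consisting of a $C$-edge and a $D$-edge at a common vertex. It shows that the $C$-edges and $D$-edges at each vertex, together with the diagonals joining them, all lie in the same Cartesian layer as that vertex. Because $C$ and $D$ are connected, every edge of $C\boxtimes D$ is linked to every other edge through such overlapping $K_4$'s. Propagating along these overlaps therefore puts all edges in a single Cartesian direction. A connected graph all of whose edges lie in $A$-layers is a single layer, so $B$ would be trivial, which is the contradiction.

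For the converse, suppose $\Gamma = C\boxtimes D$ is connected and strong-decomposable. Then $\Gamma$ cannot also be $A\square B$, by exactly the argument above, so $\Gamma$ is Cartesian-prime. For the technical details of propagating the layer, I would cite the standard fact that any two edges of $\Gamma$ in a common triangle belong to the same Cartesian factor direction (the Djoković–Winkler relation restricted to triangles), as in \cite{MR2817074}.
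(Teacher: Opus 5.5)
Your argument is correct. The paper contains no proof of this statement; it only cites Exercise~7.6 of the Handbook of Product Graphs. You instead give a self-contained elementary proof resting on two facts. First, every triangle, and hence every $K_4$, of $A \square B$ lies in a single layer. Second, in a non-trivial strong product $C \boxtimes D$ of connected graphs, every edge at a vertex $v$ lies in a $K_4$ spanned by a $C$-edge and a $D$-edge at $v$. Each direction of the statement is equivalent to there being no connected graph that is both a non-trivial Cartesian product and a non-trivial strong product, so one argument covers both, as you note. The citation buys brevity; your version makes the paper independent of the reference.

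You can shorten it in three ways.
\begin{itemize}
\item The global propagation is unnecessary. Once all edges at $v=(a,b)$ lie in one layer, you already have a contradiction, because $v$ has neighbours in both $A\times\{b\}$ and $\{a\}\times B$ (since $A$ and $B$ are connected and non-trivial). Put differently, the open neighbourhood of every vertex induces a connected subgraph of $C\boxtimes D$ but a disconnected one of $A\square B$.
\item The unique factorisation and vertex counting announced in your opening plan are never used and should be removed.
\item The appeal to the Djokovi\'c--Winkler relation is superfluous, since you already proved directly that triangles of $A\square B$ lie in layers.
\end{itemize}
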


\section{Constructing elusive groups via non-split extensions}
\label{sec:nonsplit}

In this section, we briefly formalise and generalise methods used by Chen et al.~\cite{chen2026elusive} to construct elusive groups via non-split extensions. We then discuss how these constructions apply to the groups discussed in Theorem~\ref{thm:main} and Remark~\ref{rem:otheractions}.

Given a set $\pi$ of primes, we will say that a transitive permutation group $G$ is \emph{$\pi$-elusive} if none of its derangements have orders in $\pi$, and \emph{$\pi'$-elusive} if each derangement $g$ of $G$ of prime order has $|g| \in \pi$ (this is a slight generalisation of notation used by Chen et al.~). Additionally, for an extension $N.G$ (with $N$ specified), let $\overline{\phantom{Y}}\!: X \to G$ be the natural projection, so that $\overline{Y} \cong NY/N$ for all $Y \le X$.

Recall that a non-split extension $\nonsplit{N}{G}$, corresponding to a specific action of a group $G$ on an elementary abelian group $N$ (so that $N$ is a specific $G$-module), exists if and only if the corresponding second cohomology group $\mathrm{H}^2(G,N)$ has positive dimension. By the Schur--Zassenhaus Theorem, this can only be the case if the prime dividing $|N|$ also divides $|G|$.

\begin{proposition}
\label{prop:const}
Let $G$ be a finite group, $H$ a core-free subgroup of $G$, and $N$ an elementary abelian group on which $G$ acts irreducibly, such that $\mathrm{H}^2(G,N)$ has positive dimension. Additionally, let $p$ be the prime dividing $|N|$, let $X$ be a non-split extension $\nonsplit{N}{G}$, and let $L$ be the preimage of $H$ under the projection $\overline{\phantom{R}}$. Suppose furthermore that the following conditions all hold.
\begin{enumerate}
    \item[(a)] In its action on the set $[G:H]$ of right cosets of $H$, the group $G$ is $\pi$-elusive for some set $\pi$ of primes.
    \item[(b)] The order of $G$ is not divisible by $p^2$.
    \item[(c)] The second cohomology group $\mathrm{H}^2(H,N)$ \textup{(}corresponding to the restriction to $H$ of the $G$-module $N$\textup{)} has dimension $0$.
\end{enumerate}
Then there is a one-to-one correspondence between:
\begin{enumerate}
\item subgroups $Y$ of $X$ \textup{(}up to $L$-conjugacy\textup{)} with $\overline{Y} = H$, such that $X$ is faithful and $(\pi \cup \{p\})$-elusive in its action on $[X:Y]$; and
\item proper $H$-submodules $M$ of $N$ satisfying $\bigcup_{g \in G}M^g = N$,
\end{enumerate}
given by $Y \cap N = M$.
\end{proposition}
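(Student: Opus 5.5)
The plan is to exploit two facts. Condition~(c) makes $L$ split over $N$. Condition~(b), together with non-splitness of $X$, forces $p \nmid |H|$ and forbids elements of order $p$ in $X \setminus N$. Once these are in place, everything reduces to the Schur--Zassenhaus theorem.

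\emph{Step 1 (structure of $L$ and of $X$).} Since $\overline{L} = H$ and $\mathrm{H}^2(H,N)=0$ by~(c), the extension $L = N.H$ splits; write $L = N \mathbin{:} H_0$ with $H_0 \cong H$. Next, let $P$ be a Sylow $p$-subgroup of $G$, which has order at most $p$ by~(b), and let $Q$ be its preimage in $X$. Suppose some $x \in X \setminus N$ had order $p$. Then $\overline{x}$ generates a Sylow $p$-subgroup of $G$, so $Q = N\langle x\rangle$ splits over $N$. Gaschütz's theorem ($N$ abelian normal, complemented in a Sylow $p$-subgroup containing it) would then make $X$ split over $N$, a contradiction. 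Hence every element of order $p$ in $X$ lies in $N$. Applying this to $H_0$, which meets $N$ trivially, gives $p \nmid |H|$.

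\emph{Step 2 (from $M$ to $Y$).} Let $M$ be a proper $H$-submodule of $N$, so $M \trianglelefteq L$. Because $\gcd(|H|,|N/M|)=1$, Schur--Zassenhaus shows that $L/M$ has complements to $N/M$, all of them $L$-conjugate. Take $Y = MH_0$. Then $\overline{Y}=H$ and $Y \cap N = M$, and any other $Y'$ with $\overline{Y'} = H$ and $Y' \cap N = M$ is $L$-conjugate to $Y$. This gives injectivity of the correspondence up to $L$-conjugacy. We then check the required properties of $X$ acting on $[X:Y]$.
\begin{itemize}
\item \emph{Faithfulness.} The core $K$ of $Y$ in $X$ satisfies $K \cap N \le M$, and $K\cap N$ is a $G$-submodule. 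Irreducibility of $N$ and properness of $M$ give $K \cap N = 1$. Hence $K \cong \overline{K} \trianglelefteq G$ with $\overline{K} \le H$, and since $H$ is core-free, $K = 1$.
\item \emph{Elements of order $p$.} By Step~1 these lie in $N$, and $X$ acts on $N$ through $G$. So every such element fixes a point if and only if $\bigcup_{g\in G}M^g = N$.
\item \emph{Elements of order $r \in \pi \setminus \{p\}$.} Let $x$ have order $r$. By~(a), $\overline{x}$ is conjugate into $H$, so we may assume $x \in L$. Then $\langle x\rangle$ and a subgroup of $H_0$ are both complements to $N$ in $N\langle x\rangle$, of coprime order. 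Schur--Zassenhaus conjugacy moves $x$ into $H_0 \le Y$, so $x$ fixes a point.
\end{itemize}

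\emph{Step 3 (from $Y$ to $M$).} Conversely, let $Y$ be as in item~(1) of the statement and set $M = Y\cap N$. Since $\overline{Y}=H$ and $N$ is abelian, conjugation by $Y$ on $N$ factors through $H$, so $M$ is an $H$-submodule. If $M=N$ then $Y = L \ge N$, contradicting faithfulness; so $M$ is proper. Finally, $p$-elusiveness applied to the elements of $N$ forces $\bigcup_{g}M^g = N$.

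The main obstacle I expect is Step~1. This is the only point where non-splitness and condition~(b) enter, and it is what supplies coprimality for all the Schur--Zassenhaus applications. I would invoke Gaschütz's theorem directly here, using that $N$ is abelian and that a Sylow $p$-subgroup of $X$ is $N.C_p$ or $N$ itself.
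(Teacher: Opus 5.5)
Your proposal is correct and follows essentially the same route as the paper. Both arguments split $L$ via (c), use Gasch\"utz's theorem with (b) and non-splitness to confine elements of order $p$ to $N$, use irreducibility plus core-freeness of $H$ for faithfulness, and use the covering condition $\bigcup_{g\in G}M^g=N$ for the $p$-elements. The paper cites \cite[Lemma 2.1]{chen2026elusive} for $(\pi\setminus\{p\})$-elusiveness, whereas you prove it directly by Schur--Zassenhaus conjugacy. You also make explicit the deduction $p\nmid|H|$, which supplies the conjugacy of complements that the paper's claim ``up to $L$-conjugacy these subgroups are precisely $M\mathbin{:}H$'' tacitly uses.
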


\begin{proof}
Since $N \le L$ and $L/N \cong H$, the group $L$ is an extension of $N$ by $H$. Additionally, $\mathrm{H}^2(H,N)$ has dimension $0$, and so $L = N\mathbin{:}H$ (with $H$ identified with the appropriate subgroup of $L$). As each subgroup $Y$ of $L$ with $\overline{Y} = H$ satisfies $Y \cap N \trianglelefteq Y$, it follows that (up to $L$-conjugacy) these subgroups are precisely $M\mathbin{:}H$ for the $H$-submodules $M$ of $N$. (Note that if $M = N$, then $Y = L$ contains $N$ and so is not core-free in $X$.)

Now, fix a proper $H$-submodule $M$ of $N$, and let $Y:=M\mathbin{:}H$. As $N$ lies in the kernel of the action of $X$ on $N$, the action of $G$ on $N$ induces the action of $X$ on $N$. Since $N$ is an irreducible $G$-module and $\overline{Y} = H$ is core-free in $G$, we deduce that $Y$ is core-free in $X$. From now on, we will consider $X$ as a permutation group acting faithfully and transitively on $[X:Y]$.

As $p$ is the only prime dividing $N$, \cite[Lemma 2.1]{chen2026elusive} shows that $X$ is $(\pi \setminus \{p\})$-elusive. Hence it remains to consider whether $X$ has a derangement of order $p$. Observe that this is the case if and only if an element of order $p$ in $X$ lies in no $X$-conjugate of $Y$. Since $G$ is not divisible by $p^2$, a theorem of Gasch\"utz (see \cite[Main Theorem I.17.4]{hupperttranslated}) implies that $X \setminus N$ has no elements of order $p$, by the reasoning given in the proof of \cite[Lemma 2.5]{chen2026elusive}. Thus $X$ contains no derangement of order $p$ if and only if $\bigcup_{x \in X}(Y \cap N)^x = N$. Since the action of $X$ on $Y \cap N = M$ is induced by the action of $G$, this is the case if and only if $\bigcup_{g \in G}M^g = N$, as required.
\end{proof}

We note that Proposition~\ref{prop:const} does not depend on the specific non-split extension of the $G$-module $N$ by $G$, and that the condition $\bigcup_{g \in G}M^g = N$ also appears in \cite[Theorem 3.1]{cameron2002transitive} on the construction of elusive groups of affine type. Additionally, the Schur--Zassenhaus Theorem implies that Condition (c) holds whenever $p$ does not divide $|H|$. Furthermore, $|\{M^g \mid g \in G\}| \le |G:H|$ for each $H$-submodule $M$ of $N$. Hence if $M$ and $N$ have dimension $m$ and $n$, respectively, over $\mathbb{F}_p$ and $|G:H|\,(p^m-1) < p^n-1$, then ${\bigcup_{g \in G}M^g \subsetneq N}$.

Suppose now that $G$ is $\pi'$-elusive for some set $\pi$ of primes, and that there exist a fixed subgroup $H$ of $G$ and irreducible $G$-modules $N_p$ for each prime $p \in \pi$ that satisfy the conditions of Proposition~\ref{prop:const}. If each $N_p$ contains a proper $H$-submodule $M_p$ with $\bigcup_{g \in G}M_p^g = N_p$, then the proposition implies that each non-split extension $\nonsplit{(\prod_{p \in \pi}N_p)}{G}$ (corresponding to the relevant actions of $G$ on each $N_p$) is elusive (when acting on the set of cosets of an appropriate subgroup that projects onto $H$). By \cite[Theorem 5.3]{cameron2002transitive} (see also the discussion in \cite[Section 5]{chen2026elusive}), a counterexample to the Polycirculant Conjecture constructed via this method must have $\dim(N_p) - \dim(M_p) \ge 2$ for all $p \in \pi$.

Our constructions of the elusive groups described in Theorem~\ref{thm:main} and Remark~\ref{rem:otheractions} appeal to Proposition~\ref{prop:const}. Indeed, $G_1 := \mathrm{PSU}_3(3)$ has a unique (up to conjugacy) subgroup $H_1$ isomorphic to $C_3 \times S_3$, and a unique irreducible $6$-dimensional $G$-module $N_1$ over $\mathbb{F}_7$. Similarly, $G_2 := \mathrm{P}\Gamma\mathrm{U}_3(3)$ has a unique (up to conjugacy) subgroup $H_2$ isomorphic to $S_3 \times S_3$, and a unique irreducible $6$-dimensional $G$-module $N_2$ over $\mathbb{F}_7$ satisfying $\dim(\mathrm{H}^2(G,N)) > 0$. It is easy to show computationally that, for each $i \in \{1,2\}$, the group $G_i$ is $\{7\}'$-elusive on $[G_i:H_i]$, that $G_i$, $H_i$ and $N_i$ satisfy the hypotheses and conditions of Proposition~\ref{prop:const}, and that there is a unique non-split extension $\nonsplit{N_i}{G_i}$. The elusive groups mentioned in Theorem~\ref{thm:main} and Remark~\ref{rem:otheractions} correspond to the $H_i$-submodules $M_i$ of $N_i$ satisfying $\bigcup_{g \in G_i}M_i^g = N_i$ and $\dim(N_i) - \dim(M_i) = 2$. Since the $2$-closures of the groups in Remark~\ref{rem:otheractions} are not elusive, the necessary condition given at the end of the previous paragraph is not sufficient for the constructed groups to be counterexamples to the Polycirculant Conjecture.

We conclude this section by noting that Problem 5.2 in \cite{chen2026elusive} asks if a counterexample to the Polycirculant Conjecture can be constructed as such a non-split extension $\nonsplit{N}{G}$, with $G$ quasiprimitive and $\{p\}'$-elusive on $[G:H]$ for some prime $p$. Since the $\{7\}'$-elusive permutation group $G_1$ is simple, it is quasiprimitive. Therefore, the $2$-closed elusive group $X = \nonsplit{N_1}{G_1}$ from Theorem~\ref{thm:main} and Corollary~\ref{cor:2closed} provides a positive answer to this problem.

\section{Proofs of Theorem \ref{thm:main} and Corollaries~\ref{cor:2closed} and \ref{cor:graphs}}
\label{sec:prf}
We now embark on the proofs of Theorem \ref{thm:main} and Corollaries~\ref{cor:2closed} and \ref{cor:graphs}. While the proof of the theorem is primarily computational, our proofs of the corollaries combine theoretical ideas and computations. As such, we will begin by theoretically describing the setup of all three proofs. Our computations are performed in {\sc Magma}~\cite{MR1484478}  via the accompanying code \cite{code}, which requires approximately 35 minutes of CPU time with a 3.85 GHz processor and 15 GB of RAM (or just 13 minutes and 0.8 GB if only the computations necessary to prove Theorem~\ref{thm:main} and Corollary~\ref{cor:graphs} are performed). The main results shown by our code will be emphasised via numbered Computational Propositions. 

\subsection{Proof of Theorem \ref{thm:main} and Corollary \ref{cor:2closed}}

Let $G = \mathrm{PSU}_3(3)$ and $N = 7^6$. There is a unique non-split extension $X:=\nonsplit{N}{G}$ with $G$ acting irreducibly on $N$, and (up to conjugacy) $X$ has exactly three subgroups of order $43{,}218=2.3^2.7^4$ (each isomorphic to groups of shape $7^4\mathbin{:}(C_3\times S_3)$) whose images under the natural projection from $X$ to $G$ are equal to a subgroup $H \cong C_3 \times S_3$ (cf.~the discussion at the end of Section~\ref{sec:nonsplit}). Of these three subgroups of $X$, one has exactly nine normal subgroups, while the other two have $16$ (cf.~Remark~\ref{rem:otheractions}). Let $Y$ be either of the latter two subgroups, and consider the action of $X$ on the set $\Omega:=[X:Y]$ of right cosets of $Y$ in $X$. 

Since $G$ is simple and acts irreducibly on $N$, we observe that $N$ is the unique proper non-trivial normal subgroup of the non-split extension $X$. Thus $Y$ is core-free in $X$, and so the action of $X$ is faithful and transitive of degree $16{,}464 = 2^4.3.7^3$.

\begin{compprop}
\label{compprop1}
$X$ is elusive.
\end{compprop}

We also observe computationally that there is an $X$-invariant partition $\Sigma$ of $\Omega$ consisting of $56$ blocks of size $294$ (as well as three other non-trivial $X$-invariant partitions). Let $\Delta \in \Sigma$, let $X^\Sigma$ be the permutation group induced by $X$ on $\Sigma$, and let $X_\Delta^\Delta$ be the permutation group induced by $X_\Delta$ on $\Delta$. As $X$ acts transitively on $\Omega$, Lemma \ref{lem:part} shows that $\Sigma$ is a $\kappa(X)$-invariant partition, that $\kappa(X)^\Sigma \le \kappa(X^\Sigma)$, and that $\kappa(X)_\Delta^\Delta \le \kappa(X_\Delta^\Delta)$.

Now, by Theorem \ref{thm:imprim_wr}, $\kappa(X)$ is a subgroup (up to permutational isomorphism) of the group $\kappa(X)_\Delta^\Delta \wr \kappa(X)^\Sigma$, in its natural imprimitive action on $\Xi := \Delta \times \Sigma$. It follows from the previous paragraph that $X \le \kappa(X) \le \kappa(X_\Delta^\Delta) \wr \kappa(X^\Sigma)$. In fact, we check computationally that $\kappa(X_\Delta^\Delta) = X_\Delta^\Delta$, and so $X \le \kappa(X) \le X_\Delta^\Delta \wr \kappa(X^\Sigma)$. We will write $A:=X_\Delta^\Delta$, $B:=\kappa(X^\Sigma)$ and $W:= A \wr B$.

Note that $B$ is a subgroup of the symmetric group of degree $|\Sigma| = 56$, and so $W = (A_1 \times A_2 \times \cdots \times A_{56}) : B$, where $A_i \cong A$ for each $i$. Let $C:= A_1 \times A_2 \times \cdots \times A_{56}$. We will identify $\Sigma$ with the set $\{1,\ldots,56\}$. For $(a_1,\ldots,a_{56}) \in C$, $b \in B$, $\delta \in \Delta$ and $i \in \Sigma$, the image $(\delta,i)^{(a_1,\ldots,a_{56})}$ is equal to $(\delta^{a_i},i)$, while $(\delta,i)^b = (\delta,i^b)$. Writing $\Delta_i:=\Delta \times \{i\}$, it follows that $\Delta_i^{(a_1,\ldots,a_{56})b} = \Delta_{i^b}$. It is therefore clear that each $A_i$ acts transitively on $\Delta_i$, and trivially on $\Delta_j$ for each $i \ne j$. Furthermore, the partition $\{\Delta_1,\ldots,\Delta_{56}\}$ of $\Sigma$ is $W$-invariant, and hence $X$-invariant.

Let $S:=C \cap X$, let $D:=N_W(S)$, and let $\alpha \in \Xi$. Since $C \trianglelefteq W$, it is clear that $S \trianglelefteq X$, and so $X \le D$. (In fact, $S$ is the unique proper non-trivial normal subgroup $N$ of $X$). Straightforward computations show that $X_\alpha$ has exactly $14$ orbits on $\Xi$ (each of size $63$) that extend to larger orbits of $D_\alpha$ (each of size $441$). Of the $14$ corresponding $X$-orbitals of size $63 \cdot 16{,}464$, exactly eight are self-paired, and of these eight, only one is $D$-conjugate to a non-self-paired $X$-orbital. Let $\mathcal{U}$ be the set of the remaining seven self-paired $X$-orbitals, which all have valency 63.

\begin{compprop}
\label{compprop2}
The group $X$ is the full automorphism group of the orbital graph of each orbital in $\mathcal{U}$, and these seven graphs are connected and pairwise non-isomorphic.
\end{compprop}

Thus we have proved Theorem~\ref{thm:main}, and it immediately follows that $X$ is $2$-closed. However, as promised at the end of Section \ref{sec:intro}, we will continue to prove this fact without relying on the above automorphism group calculation. Note also that we have not been able to compute the automorphism groups of the orbital graphs of the remaining seven $X$-orbitals that are not $D$-orbitals. (The orbital graph of any $X$-orbital that is also a $D$-orbital clearly has automorphism group containing $D$.)

\begin{compprop}
\label{compprop3}
For each transitive overgroup $M$ of $S$ in $D$ with $|X|$ dividing but not equal to $|M|$, the number of orbits on $\Xi$ of the point stabiliser $M_\alpha$ is less than the number of orbits of $X_\alpha$.
\end{compprop}

Observe that $D \cap \kappa(X)$ contains $X$, and therefore contains $S$; that the transitivity of $X$ implies the transitivity of $D \cap \kappa(X)$; and that $(D \cap \kappa(X))_\alpha$ has the same orbits on $\Xi$ as $X_\alpha$. We therefore deduce from Computational Proposition~\ref{compprop3} that $D \cap \kappa(X) = X$.
%(This fact can also be computed directly, by showing that $X$ is the stabiliser in $D$ of an $X$-orbit on $\Xi \times \Xi$ of size $1,037,232 = 63 \cdot 16{,}464$, which extends to a $D$-orbit of size $7,260,624 = 441 \cdot 16{,}464$. However, this requires about three days of CPU time and about 35 GB of RAM.)

Next, fix $c \in C \cap \kappa(X)$, and write $c = (c_1,\ldots,c_{56})$, with $c_i \in A_i$ for each $i$. By Proposition \ref{prop:wielandt}, for each $\alpha, \beta \in \Xi$, there exists $x \in X$ such that $(\alpha,\beta)^c = (\alpha,\beta)^x$. Consider the case where $\alpha \in \Delta_i$ and $\beta \in \Delta_j$ for $i \ne j$. Since $C$ stabilises each of $\Delta_i$ and $\Delta_j$ setwise and $\{\Delta_1,\ldots,\Delta_{56}\}$ is an $X$-invariant partition, $x$ lies in the intersection $T_{ij}$ of the setwise stabilisers $X_{\Delta_i}$ and $X_{\Delta_j}$. Therefore (cf.~the proof of \cite[Lemma 3.5(2)]{totally2closed}), for all $(\alpha,\beta) \in \Delta_i \times \Delta_j$, the pair $(\alpha,\beta)^c = (\alpha^{c_i},\beta^{c_j})$ lies in the orbit $(\alpha,\beta)^{T_{ij}}$. Equivalently, letting $A_{ij}:= \langle A_i, A_j \rangle$ and letting $\mathcal{O}_{ij}$ be the set of orbits of $T_{ij}$ on $\Delta_i \times \Delta_j$, the tuple $(1,\ldots,1,c_i,1,\ldots,1,c_j,1,\ldots,1)$ lies in the intersection $\bigcap_{O \in \mathcal{O}_{ij}} (A_{ij})_O$ of setwise stabilisers. Therefore, an element $c \in C$ lies in $\kappa(X)$ only if this condition holds for all distinct $i$ and $j$.
%letting $A_{i,j}:= \langle A_i, A_j \rangle$ and $U_{i,j}:=\langle A_i, A_j, T_{ij} \rangle$, and letting $\mathcal{O}_{i,j}$ be the set of orbits of $T_{ij}$ on $\Delta_i \times \Delta_j$, the pair $(c_i, c_j)$ lies in the intersection $A_{i,j} \cap \bigcap_{O \in \mathcal{O}_{i,j}} (U_{i,j})_O$, where $(U_{i,j})_O$ is the setwise stabiliser of $O$ in $U_{i,j}$
(It does not take much more work to show that this is also a sufficient condition, but we will not need this fact.)

For each $i \in \{1,\ldots,56\}$, let $F_i$ be the subgroup of $A_i$ consisting of all elements $c_i$ such that, for all $j \ne i$, there exists $c_j \in A_j$ such that $(1,\ldots,1,c_i,1,\ldots,1,c_j,1,\ldots,1)$ lies in $\bigcap_{O \in \mathcal{O}_{ij}}(A_{ij})_O$. Then $C \cap \kappa(X) \le \langle F_1,\ldots,F_{56}\rangle$.

\begin{compprop}
\label{compprop4}
There exists $k \in \{1,\ldots,56\}$ such that $F_k \le D$.
\end{compprop}

Let $x \in X$. Then $(T_{ij})^x = T_{i^xj^x}$ and $(A_i)^x = A_{i^x}$ for each $i$ and $j$, and so $(F_i)^x = F_{i^x}$. Since $X$ is transitive on $\Xi$, and since $D$ contains $\langle F_k, X \rangle$ by Computational Proposition~\ref{compprop4}, it follows that $D$ contains $\langle F_1,\ldots,F_{56}\rangle$, which contains $C \cap \kappa(X)$. Thus $C \cap \kappa(X) \le D \cap \kappa(X) = X$, and so the normal subgroup $C \cap \kappa(X)$ of $\kappa(X)$ is equal to $C \cap X = S$. Therefore, $\kappa(X) \le N_W(S) = D$. Again using the fact that $D \cap \kappa(X) = X$, we conclude that $\kappa(X) = X$, i.e.~$X$ is $2$-closed. Combining this fact with Computational Proposition~\ref{compprop1}, we see that $X$ is a counterexample to the Polycirculant Conjecture.

Finally, Proposition \ref{prop:dir_prod} shows that, for each integer $k \ge 1$, every group constructed from $k$ copies of $X$ via direct products and/or wreath products has a $2$-closed elusive action of degree $16{,}464^k$ on the set $\Omega^k$. Each of these groups is a counterexample to the Polycirculant Conjecture. This completes the proof of Corollary~\ref{cor:2closed}.

\subsection{Proof of Corollary \ref{cor:graphs}} Let $\Gamma_1$ and $\Gamma_2$ be any two distinct graphs from Computational Proposition~\ref{compprop2}.
We define a family $\mathcal{G} := \{\mathcal{G}_i \mid i \in \mathbb{N}\}$ of graphs by $\mathcal{G}_1 := \Gamma_1$ and, for each $i \in \mathbb{N}$,
\[
\mathcal{G}_{2i} := \mathcal{G}_{2i-1} \square \Gamma_2, \qquad \textrm{and} \qquad \mathcal{G}_{2i+1} := \mathcal{G}_{2i}\boxtimes \Gamma_1.
\]

We claim that each member of $\mathcal{G}$ is a connected, vertex-transitive graph admitting no semiregular automorphism. Our computations reveal that no two vertices in $\Gamma_1$ or $\Gamma_2$ have the same closed neigbourhoods. Moreover, \cite[Theorem 6.13]{MR2817074} shows that if $\Gamma_1$ or $\Gamma_2$ is not prime with respect to the Cartesian product, then its automorphism group is a direct product of wreath products. By \cite[Lemma 7.2, Theorem 7.18]{MR2817074} and the above closed neighbourhood property, the same is true for the strong product. However, Computational Proposition \ref{compprop2} shows that $X$ is the full automorphism group of each of $\Gamma_1$ and $\Gamma_2$. The unique proper non-trivial normal subgroup of $X$ is $N \cong 7^6$, which has no complement in $X$. Hence $X$ cannot be written as a direct product, nor as a single wreath product. Therefore, $\Gamma_1$ and $\Gamma_2$ must be prime with respect to the Cartesian and strong products.

%We claim that each member of $\mathcal{G}$ is a connected, vertex-transitive graph admitting no semiregular automorphism. Since $\Gamma_1$ and $\Gamma_2$ are connected and have clique number 3, Proposition \ref{prop:prod_props}(\ref{p0}) shows that  these graphs are prime with respect to the strong product. Moreover, \cite[Theorem 6.13]{MR2817074} shows that if $\Gamma_1$ or $\Gamma_2$ is not prime with respect to the Cartesian product, then its automorphism group is a direct product of wreath products. However, by Computational Proposition \ref{compprop2}, $X$ is the full automorphism group of each of $\Gamma_1$ and $\Gamma_2$. The unique proper non-trivial normal subgroup of $X$ is $N \cong 7^6$, which has no complement in $X$. Hence $X$ cannot be written as a direct product, nor as a single wreath product. Therefore, $\Gamma_1$ and $\Gamma_2$ must be prime with respect to the Cartesian product. Furthermore, our computations reveal that no two vertices in $\Gamma_1$ or $\Gamma_2$ have the same closed neigbourhoods.

We now proceed via induction. Suppose that, for some $i \in \mathbb{N}$, the graph $\mathcal{G}_i$ is connected and vertex-transitive, that no two of its vertices have equal closed neighbourhoods, and that it does not admit any semiregular automorphism. We immediately deduce from Proposition~\ref{prop:prod_props} that $\mathcal{G}_{i+1}$ is connected and vertex-transitive, and that no two of its vertices have equal closed neighbourhoods. By applying the previous paragraph when $i = 1$, or Proposition~\ref{prop:pingpong} and the definition of $\mathcal{G}_{i}$ when $i > 1$, we see that $\mathcal{G}_{i}$ is prime with respect to the product used to construct $\mathcal{G}_{i+1}$, and also relatively prime to $\Gamma_2$ and (if $i > 1$) $\Gamma_1$. Since $\Gamma_1$ and $\Gamma_2$ admit no semiregular automorphisms, it follows from Proposition \ref{prop:prod_props}(\ref{p2})--(\ref{p3}) and our inductive hypothesis that $\mathcal{G}_{i+1}$ also admits no semiregular automorphism. Observing that $|V(\mathcal{G}_i)| = 16{,}464^i$ for each $i\geq 1$ completes the proof of Corollary \ref{cor:graphs}.

\begin{remark}
    One can replace $\Gamma_1$ and $\Gamma_2$ at each step of the construction of $\mathcal{G}$ with any of the seven orbital graphs from Computational Proposition \ref{compprop2}, provided that the two factors of $\mathcal{G}_2$ are chosen to be distinct. It is easy to see that each graph in the resulting infinite family of graphs will be vertex-transitive and will not admit semiregular automorphisms. As a result, this construction produces countably many families of positive answers to the Semiregularity Problem.
    There are also likely to be other constructions, perhaps also based on graph products, that extend Computational Proposition \ref{compprop2} to further infinite families of vertex-transitive graphs that do not admit semiregular automorphisms. 
\end{remark}

\bibliographystyle{abbrv}
\bibliography{bib}
\end{document}